\numberwithin{equation}{section}
\theoremstyle{definition}
\newtheorem{thm}{Theorem}[section]
\newtheorem{prop}[thm]{Proposition}
\newtheorem{lem}[thm]{Lemma}
\newtheorem{dfn}[thm]{Definition}
\newtheorem{rem}[thm]{Remark}
\begin{document}

\title{\huge Jones Wenzl projectors in Verma modules}
\author{Ryoga Matsumoto}
\date{}
\maketitle

\begin{abstract}
We construct special idempotents in $\mathrm{End}_{U_q(\mathfrak{sl}_2)}(M(\mu_1)\otimes\cdots \otimes M(\mu_n))$ like the Jones Wenzl projector where $M(\mu_i)$ is Verma module whose highest weight is $\mu_i$ and is complex number except non-negative integer.
\end{abstract}

\tableofcontents

\section{Introduction}
Temperley Lieb algebra is introduced in \cites{TemperleyLieb} to solve the problems of statistical physics. Furthermore, Wenzl introduced the special idempotents in Temperley Lieb algebra called Jones Wenzl idempotent in \cites{Wenzl}. The Jones Wenzl idempotent plays an important role in topology. Murakami and Murakami constructed knot invariants using the Jones Wenzl idempotent in \cite{MurakamiMurakami}. The knot invariants extend the Jones polynomial obtained in \cites{Jones}. It is known that Temperley Lieb algebra is isomorphic to the endomorphism algebra of tensor products of $2$-dimensional irreducible representation over $U_q(\mathfrak{sl}_2)$ in \cites{AndersenLehrerZhang}. Then it is important to consider the idempotents in endomorphism algebras of tensor products of $U_q(\mathfrak{sl}_2)$ representations. In \cites{LacabanneTubbenhauerVaz}, the structure of the endomorphism algebra of tensor products of Verma modules over $U_q(\mathfrak{sl}_2)$ is determined using Howe duality-like method. However, the special idempotent of the endomorphism algebra like the Jones Wenzl idempotent has not been constructed. In this article, we construct special idempotents in $\mathrm{End}_{U_q(\mathfrak{sl}_2)}(M(\mu_1)\otimes\cdots \otimes M(\mu_n))$ like Jones Wenzl projector where $\mu_i$ is complex number without non-negative integer. More precisely, we obtain the following.
\begin{thm}
There exists an element $P_{\mu_1, \ldots, \mu_n} \in \mathrm{End}_{U_q(\mathfrak{sl}_2)}(M(\mu_1)\otimes\cdots \otimes M(\mu_n))$ as follows.
\begin{align*}
P_{\mu_1, \ldots, \mu_n}^2&=P_{\mu_1, \ldots, \mu_n} \\
\end{align*}
\end{thm}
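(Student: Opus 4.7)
The plan is to construct $P_{\mu_1,\ldots,\mu_n}$ inductively, mimicking Wenzl's original recursion for the Jones--Wenzl projector and then verifying idempotence by induction on $n$. The conceptual target is the projection onto the highest weight Verma summand of $M(\mu_1)\otimes\cdots\otimes M(\mu_n)$: since no $\mu_i$ is a non-negative integer, each $M(\mu_i)$ is irreducible, and the vector $v_{\mu_1}\otimes\cdots\otimes v_{\mu_n}$ of highest weight vectors generates a submodule isomorphic to $M(\mu_1+\cdots+\mu_n)$, which for generic parameters is a direct summand.

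First I would set $P_{\mu_1}=\mathrm{id}$ and assume inductively that an idempotent $P_{\mu_1,\ldots,\mu_{n-1}}$ has been constructed whose image is the highest weight Verma submodule of $M(\mu_1)\otimes\cdots\otimes M(\mu_{n-1})$. Restricting to this image and tensoring with $M(\mu_n)$ yields $M(\mu_1+\cdots+\mu_{n-1})\otimes M(\mu_n)$, which for generic weights decomposes as $\bigoplus_{k\ge 0}M(\mu_1+\cdots+\mu_n-2k)$ by a standard character argument. I would then define
\[
P_{\mu_1,\ldots,\mu_n}=\bigl(P_{\mu_1,\ldots,\mu_{n-1}}\otimes\mathrm{id}\bigr)-\sum_{k\ge 1}c_k\,A_k,
\]
where each $A_k$ is an endomorphism assembled from the cup/cap-type intertwiners classified in \cite{LacabanneTubbenhauerVaz}, and the scalar $c_k$ is pinned down by demanding that $P_{\mu_1,\ldots,\mu_n}$ annihilate the $k$-th Verma summand. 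Idempotence $P^2=P$ should then follow by expanding the product: the leading term reproduces $P_{\mu_1,\ldots,\mu_{n-1}}\otimes\mathrm{id}$, and the cross and correction terms cancel because the inductive projector is idempotent and the $A_k$ are killed after reapplying $P_{\mu_1,\ldots,\mu_{n-1}}\otimes\mathrm{id}$ to their images.

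The hardest step will be bookkeeping the scalars $c_k$ so that the denominators appearing in them are manifestly invertible under the hypothesis on the $\mu_i$. In the Jones--Wenzl case these denominators are quantum integers $[m+1]$; here the natural analogues should be expressions of the form $[\mu_1+\cdots+\mu_{n-1}-\mu_n-2k+\mathrm{const}]$, and the assumption that no $\mu_i$ is a non-negative integer should be exactly what is needed to guarantee nonvanishing of each such quantum integer appearing along the recursion. Carrying out this scalar bookkeeping, together with the claim that the resulting element really does project onto the top Verma summand at each inductive step, is where the bulk of the technical work will live.
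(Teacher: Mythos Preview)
Your proposal has genuine gaps that prevent it from being a proof as written. First, the correction terms $A_k$ are never actually defined. You invoke ``cup/cap-type intertwiners classified in \cite{LacabanneTubbenhauerVaz}'', but Verma modules are infinite-dimensional and not dualizable, so there is no literal analogue of the Temperley--Lieb generator $e_i=\mathrm{cap}\circ\mathrm{cup}$ acting between two Verma tensor factors; you would need to say precisely which elements of the endomorphism algebra play this role and why they suffice. Second, the sum $\sum_{k\ge 1}c_k A_k$ is infinite, and you give no argument that it defines an honest endomorphism (e.g.\ that on each fixed weight space only finitely many terms act nontrivially). Third, you explicitly defer the scalar bookkeeping that you yourself identify as the hardest step; without it there is no proof. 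Finally, your invertibility claim is too optimistic: the denominators that arise involve sums such as $\mu_1+\cdots+\mu_{n-1}$, and nothing in the hypothesis ``no individual $\mu_i$ is a non-negative integer'' prevents such a sum from being one. (The paper sidesteps this by working over the function field $\mathbb{C}(q,q^{\mu_1},\ldots,q^{\mu_n})$, where the $q^{\mu_i}$ are algebraically independent.)

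The paper's route is both different and much shorter. Rather than a Wenzl-style recursion with correction terms, it writes down explicit merge and split intertwiners
\[
E_{\mu,\lambda}\colon M(\mu)\otimes M(\lambda)\to M(\mu+\lambda),\qquad
F_{\mu,\lambda}\colon M(\mu+\lambda)\to M(\mu)\otimes M(\lambda),
\]
verifies by direct computation that they are $U_q(\mathfrak{sl}_2)$-linear, and proves the single key identity $E_{\mu,\lambda}\circ F_{\mu,\lambda}=\mathrm{Id}_{M(\mu+\lambda)}$. The projector is then defined recursively by $P_{\mu_1,\ldots,\mu_n}=F_{\mu_1,\mu_2;\mu_3,\ldots,\mu_n}\,P_{\mu_1+\mu_2,\mu_3,\ldots,\mu_n}\,E_{\mu_1,\mu_2;\mu_3,\ldots,\mu_n}$, and idempotence is a two-line induction: in $P^2$ the inner $E\circ F$ collapses to the identity, leaving $P_{\mu_1+\mu_2,\mu_3,\ldots,\mu_n}^2=P_{\mu_1+\mu_2,\mu_3,\ldots,\mu_n}$ by hypothesis. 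No infinite sums, no undetermined scalars, and all denominators are explicit products $\prod_{i=0}^{k-1}[\mu+\lambda-i]$ that are visibly nonzero in the chosen ground field.
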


\subsection*{Acknowledgements}
We would like to thank Yuji Terashima for valuable discussions. This work was supported by JST, the establishment of university fellowships towards the creation of science technology innovation, Grant Number JPMJFS2102.

\section{Representation of $U_q(\mathfrak{sl}_2)$}
In this section, we define the quantum algebra $U_q(\mathfrak{sl}_2)$ and the representations. Hereinafter, suppose that $q$ is generic.
\begin{dfn}
The quantum group $U_q(\mathfrak{sl}_2)$ of $\mathfrak{sl}_2 $ is the algebra on $\mathbb{C}(q)$ generated by the elements $K,K^{-1},E,F$ that satisfy the following relations.
\begin{align*}
KK^{-1}=1=K^{-1}K,\quad KE=q^2EK \quad
KF=q^{-2}FK,\quad EF-FE=\frac{K-K^{-1}}{q-q^{-1}}
\end{align*}
The coproduct $\Delta: U_q(\mathfrak{sl}_2)\rightarrow U_q(\mathfrak{sl}_2)\otimes U_q(\mathfrak{sl}_2)$ is defined below on the algebra.
\begin{align*}
\Delta(K^{\pm1}):=K^{\pm1}\otimes K^{\pm1},\quad \Delta(F):=F\otimes 1+K^{-1} \otimes F, \quad \Delta(E):=E\otimes K+1 \otimes E \\
\end{align*}
\end{dfn}

\begin{dfn}
Let $\mu$ be a complex number. Then,
\[
[\mu]:=\frac{q^\mu-q^{-\mu}}{q-q^{-1}}
\]
\end{dfn}

\begin{dfn}
$k+1$-dimensional irreducible representation $V_k$ over $\mathbb{C}(q)$ has a basis $\{v_0, v_1, \cdots , v_k \}$ called an induced basis and satisfies the following relations for the generators of $U_q(\mathfrak{sl}_2)$.
\begin{align*}
K\cdot v_i:= q^{k-2i}v_i \\
E\cdot v_i:= [i]v_{i-1} \\
F\cdot v_i:= [k-i]v_{i+1}
\end{align*}
where $v_{-1}=v_{k+1}=0$.
\end{dfn}

\begin{dfn}
Let $k$, $l$ be non-negative integers such that $l < k$. We define $[k]!$ and $\begin{bmatrix}
k \\
l
\end{bmatrix}$ as follows.
\begin{align*}
[k]!&:= [k][k-1]\cdots [1] \\
\begin{bmatrix}
k \\
l
\end{bmatrix}
&:=\frac{[k]!}{[l]![k-l]!}
\end{align*}
\end{dfn}

\begin{lem} \label{2term}
We have
\begin{align*}
\begin{bmatrix}
k+1 \\
j
\end{bmatrix}
=q^{-k+j-1}\begin{bmatrix}
k \\
j-1
\end{bmatrix}
+q^j\begin{bmatrix}
k \\
j
\end{bmatrix}
\end{align*}
\end{lem}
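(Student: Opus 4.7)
The plan is to prove the identity by reducing it to an elementary identity among quantum integers and then verifying that identity directly from the definition $[\mu]=(q^\mu-q^{-\mu})/(q-q^{-1})$.

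First I would expand both sides using the defining formula $\begin{bmatrix}k\\l\end{bmatrix}=[k]!/([l]![k-l]!)$, and then clear denominators by multiplying through by $[j]![k+1-j]!/[k]!$. The left-hand side becomes simply $[k+1]!/[k]!=[k+1]$, while the right-hand side collapses — after using $[j]!=[j]\cdot[j-1]!$ and $[k+1-j]!=[k+1-j]\cdot[k-j]!$ — to
\begin{equation*}
q^{-k+j-1}[j]+q^{j}[k+1-j].
\end{equation*}
So the lemma reduces to proving the single scalar identity $[k+1]=q^{-k+j-1}[j]+q^{j}[k+1-j]$.

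Next I would verify this last identity by substituting $[n]=(q^n-q^{-n})/(q-q^{-1})$ on the right-hand side. A straightforward expansion gives
\begin{equation*}
q^{-k+j-1}\frac{q^j-q^{-j}}{q-q^{-1}}+q^j\frac{q^{k+1-j}-q^{j-k-1}}{q-q^{-1}}=\frac{q^{-k+2j-1}-q^{-k-1}+q^{k+1}-q^{-k+2j-1}}{q-q^{-1}},
\end{equation*}
and the cross terms $\pm q^{-k+2j-1}$ cancel, leaving exactly $(q^{k+1}-q^{-k-1})/(q-q^{-1})=[k+1]$.

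There is essentially no obstacle here: the lemma is a $q$-analogue of Pascal's rule for binomial coefficients, and the entire proof is a bookkeeping exercise in the definition of $[\mu]$. The only mild care needed is keeping track of the asymmetric powers of $q$ coming from the two summands, which is why one checks that the $q^{-k+2j-1}$ terms cancel rather than adding up; beyond that the computation is mechanical and completes the proof.
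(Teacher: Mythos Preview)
Your proof is correct and is essentially identical to the paper's own argument: both reduce the identity to $[k+1]=q^{-k+j-1}[j]+q^{j}[k+1-j]$ by pulling out the common factor $[k]!/([j]!\,[k+1-j]!)$ (you phrase this as multiplying through by its reciprocal), and then verify that scalar identity via the same expansion and the same cancellation of the $q^{-k+2j-1}$ terms.
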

\begin{proof}
It is given by directly computing.
\begin{align*}
q^{-k+j-1}\begin{bmatrix}
k \\
j-1
\end{bmatrix}
+q^j\begin{bmatrix}
k \\
j
\end{bmatrix}
&= \frac{[k]!}{[j]![k+1-j]!}(q^{-k+j-1}[j]+q^j[k+1-j]) \\
&= \frac{[k]!}{[j]![k+1-j]!(q-q^{-1})}(q^{-k+2j-1}-q^{-k-1}+q^{k+1}-q^{-k+2j-1}) \\
&= \begin{bmatrix}
k+1 \\
j
\end{bmatrix}
\end{align*}
\end{proof}

\begin{dfn}
Let $\mu$ be a complex number without non-negative integer. Verma module $M(\mu)$ over $\mathbb{C}(q,q^\mu)$ has a basis $\{v_0, v_1, \cdots\}$ called an induced basis and satisfies the following relations for the generators of $U_q(\mathfrak{sl}_2)$.
\begin{align*}
K\cdot v_i:= q^{\mu-2i}v_i \\
E\cdot v_i:= [i]v_{i-1} \\
F\cdot v_i:= [\mu-i]v_{i+1}
\end{align*}
where $v_{-1}=0$.
\end{dfn}

\begin{dfn}
Given a module $M, M'$ on $U_q(\mathfrak{sl}_2)$, by using the action induced from the coproduct, the tensor product representation $M \otimes M' $ is defined from the following relations. For all $m \in M $ and $m' \in M' $, we have
\begin{align*}
K^{\pm1}\cdot(m\otimes m'):=(K^{\pm1}\cdot m)\otimes (K^{\pm1}\cdot m') \\
F\cdot (m\otimes m'):=(F\cdot m)\otimes m'+(K^{-1}\cdot m)\otimes (F\cdot m') \\
E\cdot (m\otimes m'):=(E\cdot m)\otimes (K\cdot m')+m \otimes(E\cdot m')
\end{align*}
\end{dfn}

\section{Endomorphism algebras of $V_1^{\otimes n}$ and $M(\mu_1)\otimes\cdots \otimes M(\mu_n)$}
In this section, we introduce Temperley-Lieb algebra and Temperley-Lieb algebra of type B. Hereinafter, we simply denote $v_i \otimes v_j \in M \otimes N$ by $v_{i,j}$ where $M$ and $N$ are $M(\mu)$ or $V_k$ respectively.

\begin{dfn}
The intertwining operators $cap: \mathbb{C}(q) \rightarrow V_1\otimes V_1$ and $cup: V_1\otimes V_1 \rightarrow \mathbb{C}(q)$ over $U_q\mathfrak{sl}_2$ are defined as follows.
\begin{align*}
&cap(1)=v_{0,1}-q^{-1}v_{1,0} \\
&cup(v_{0,0})=cup(v_{1,1})=0, \ cup(v_{0,1})=-q, \ cup(v_{1,0})=1
\end{align*}
\end{dfn}
\begin{dfn}
Temperley Lieb algebra $TL_n$ is a $\mathbb{C}(q)$-algebra $\mathrm{End}_{U_q(\mathfrak{sl}_2)}(V_1^{\otimes n})$. The generators of $TL_n$ is $\{e_i\}$ ($i=1, \cdots, n-1$) where $e_i$ is defined as follows.
\[
e_i= \mathrm{Id}^{\otimes (i-1)}\otimes (cup \circ cap)\otimes \mathrm{Id}^{\otimes (n-i-1)}
\]
\end{dfn}

\begin{dfn}
We let $TL_{\mu_1,\cdots,\mu_n}:= End_{U_q(\mathfrak{sl}_2)}(M(\mu_1)\otimes\cdots \otimes M(\mu_n))$.
\end{dfn}

\begin{prop}
Let the ground field of $U_q(\mathfrak{sl}_2)$, $M(\mu)\otimes M(\lambda)$ and $M(\mu+\lambda)$ be $\mathbb{C}(q,q^\mu,q^\lambda)$. Set $\mathbb{C}(q, q^\mu, q^\lambda)$ linear maps $E_{\mu, \lambda}: M(\mu) \otimes M(\lambda) \rightarrow M(\mu+\lambda)$ and $F_{\mu, \lambda}: M(\mu+\lambda) \rightarrow M(\mu) \otimes M(\lambda)$ as follows.
\begin{align*}
E_{\mu, \lambda}(v_{i,j})&:= q^{i(\lambda-j)}v_{i+j} \\
F_{\mu, \lambda}(v_k)&:= \sum_{j=0}^k q^{-(k-j)(\mu-j)}\begin{bmatrix}
k \\
j
\end{bmatrix}
\frac{\prod_{i=0}^{j-1}[\mu-i] \prod_{i=0}^{k-j-1}[\lambda-i]}{\prod_{i=0}^{k-1}[\mu+\lambda-i]}v_{j,k-j}
\end{align*}
where $\prod_{i=0}^{-1}:=1$. Then we have $E_{\mu, \lambda} \in \mathrm{Hom}_{U_q(\mathfrak{sl}_2)}(M(\mu) \otimes M(\lambda), M(\mu+\lambda))$ and $F_{\mu,\lambda} \in \mathrm{Hom}_{U_q(\mathfrak{sl}_2)}(M(\mu+\lambda), M(\mu) \otimes M(\lambda))$.
\end{prop}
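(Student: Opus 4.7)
The plan is to verify that each of $E_{\mu,\lambda}$ and $F_{\mu,\lambda}$ intertwines the action of the generators $K$, $E$, $F$. For $E_{\mu,\lambda}$ the $K$-equivariance is automatic since the weight of $v_{i,j}$ in $M(\mu)\otimes M(\lambda)$ is $\mu+\lambda-2(i+j)$, which coincides with the weight of $v_{i+j}$ in $M(\mu+\lambda)$. A direct computation comparing $E\cdot E_{\mu,\lambda}(v_{i,j})$ with $E_{\mu,\lambda}(E\cdot v_{i,j})$, and similarly for $F$, reduces in each case---after factoring out the common power $q^{i(\lambda-j)}$---to the elementary quantum integer identity $[a+b]=q^{-b}[a]+q^{a}[b]$, applied with $(a,b)=(i,j)$ for the $E$-check and with $(a,b)=(\lambda-j,\mu-i)$ for the $F$-check. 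This handles $E_{\mu,\lambda}$.

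For $F_{\mu,\lambda}$ a direct term-by-term verification is much heavier, so I would instead use the universal property of the Verma module. The vector $v_{0,0}\in M(\mu)\otimes M(\lambda)$ is a highest weight vector of weight $\mu+\lambda$: one has $K\cdot v_{0,0}=q^{\mu+\lambda}v_{0,0}$, and $E\cdot v_{0,0}=(E v_0)\otimes(K v_0)+v_0\otimes(E v_0)=0$. Hence there is a unique $U_q(\mathfrak{sl}_2)$-homomorphism $\widetilde{F}\colon M(\mu+\lambda)\to M(\mu)\otimes M(\lambda)$ sending $v_0$ to $v_{0,0}$. Iterating $F\cdot v_i=[\mu+\lambda-i]v_{i+1}$ in $M(\mu+\lambda)$ gives $F^k\cdot v_0=\bigl(\prod_{i=0}^{k-1}[\mu+\lambda-i]\bigr)v_k$, so $\widetilde{F}(v_k)=\bigl(\prod_{i=0}^{k-1}[\mu+\lambda-i]\bigr)^{-1}F^k\cdot v_{0,0}$. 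It therefore suffices to verify the explicit identity $F^k\cdot v_{0,0}=\bigl(\prod_{i=0}^{k-1}[\mu+\lambda-i]\bigr)F_{\mu,\lambda}(v_k)$; once this holds, $F_{\mu,\lambda}=\widetilde{F}$ is automatically a $U_q(\mathfrak{sl}_2)$-homomorphism.

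This remaining identity I would prove by induction on $k$, the base case $k=0$ being trivial. For the inductive step one applies $\Delta(F)=F\otimes 1+K^{-1}\otimes F$ to each summand of the inductive hypothesis. The contribution to a fixed basis vector $v_{j,k+1-j}$ then comes from exactly two summands of the previous level: from $v_{j-1,k-j+1}$ through the $F\otimes 1$ piece, and from $v_{j,k-j}$ through the $K^{-1}\otimes F$ piece. One must collect these two contributions and show that they combine into the predicted coefficient. The key combinatorial step is precisely Lemma \ref{2term}, which merges the two $q$-binomial coefficients, together with elementary manipulations of the shifted products $\prod[\mu-i]$ and $\prod[\lambda-i]$. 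The main bookkeeping obstacle is aligning the various $q$-exponents so that they match the recursion of Lemma \ref{2term}; once this alignment is carried out the induction closes and the proposition follows.
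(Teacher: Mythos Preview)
Your treatment of $E_{\mu,\lambda}$ is the same as the paper's: both reduce the $E$- and $F$-checks to the identity $[a+b]=q^{-b}[a]+q^{a}[b]$ after factoring out $q^{i(\lambda-j)}$.

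For $F_{\mu,\lambda}$ your route differs from the paper's. The paper verifies commutation with each of $K$, $E$, $F$ by direct computation on $F_{\mu,\lambda}(v_k)$; the $E$-check in particular is a fairly long manipulation, and the $F$-check is the induction using Lemma~\ref{2term}. You instead invoke the universal property of $M(\mu+\lambda)$: since $v_{0,0}$ is a highest weight vector of weight $\mu+\lambda$, there is a unique intertwiner $\widetilde{F}$ with $\widetilde{F}(v_0)=v_{0,0}$, and it remains only to identify $\widetilde{F}(v_k)$ with the given formula. That identification amounts to computing $F^k\cdot v_{0,0}$ inductively, and this computation is \emph{exactly} the paper's $F$-equivariance check (same reindexing, same use of Lemma~\ref{2term}). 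So the computational core coincides, but your argument buys you the $K$- and $E$-equivariance for free and is conceptually cleaner; the paper's approach is more self-contained in that it does not appeal to the universal property and makes every step explicit. Both are correct. One small point worth making explicit in your write-up: the division by $\prod_{i=0}^{k-1}[\mu+\lambda-i]$ is legitimate because over the ground field $\mathbb{C}(q,q^{\mu},q^{\lambda})$ these quantum numbers are nonzero.
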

\begin{proof}
We must show the commutativity $XE_{\mu,\lambda}=E_{\mu, \lambda}X$ and $XF_{\mu, \lambda}=F_{\mu, \lambda}X$ where $X=K, E, F$. For $i,j \geq 0$, the action below is defined as follows.
\begin{align*}
K, E, F: M(\mu)\otimes M(\lambda) \rightarrow M(\mu)\otimes M(\lambda) \\
\end{align*}
\begin{align*}
Kv_{i,j}&= q^{\mu+\lambda-2(i+j)}v_{i,j} \\
Ev_{i,j}&= q^{\lambda-2j}[i]v_{i-1,j}+[j]v_{i,j-1} \\
Fv_{i,j}&= [\mu-i]v_{i+1,j}+q^{-\mu+2i}[\lambda-j]v_{i,j+1}
\end{align*}
First we will show that $E_{\mu}$ is an intertwining operator over $U_q(\mathfrak{sl}_2)$. Now we prove the following diagram is commutative.
\[
\xymatrix{
M(\mu)\otimes M(\lambda) \ar[r]^{E_{\mu,\lambda}} \ar[d]^{X} & M(\mu+\lambda) \ar[d]^{X} \\
M(\mu)\otimes M(\lambda) \ar[r]^{E_{\mu,\lambda}} & M(\mu+\lambda)
}
\]
Consider the case $X=K$, we obtain
\begin{align*}
&KE_{\mu,\lambda}v_{i,j}= q^{\mu+\lambda-2(i+j)}q^{i(\lambda-j)}v_{i+j} \\
&E_{\mu,\lambda}Kv_{i,j}= q^{i(\lambda-j)}q^{\mu+\lambda-2(i+j)}v_{i+j}
\end{align*}
Then we have $KE_{\mu,\lambda}=E_{\mu,\lambda}K$.
Consider the case $X=E$, we obtain
\begin{align*}
EE_{\mu,\lambda}v_{i,j}&= [i+j]q^{i(\lambda-j)}v_{i+j-1} \\
E_{\mu,\lambda}Ev_{i,j}&= q^{(i-1)(\lambda-j)}q^{\lambda-2j}[i]v_{i+j-1}+q^{i(\lambda-j+1)}[j]v_{i+j-1} \\
&= q^{i(\lambda-j)}(q^{-j}[i]+q^i[j])v_{i+j-1} \\
&= q^{i(\lambda-j)}[i+j]v_{i+j-1}
\end{align*}
Then we have $EE_{\mu,\lambda}=E_{\mu,\lambda}E$.
Consider the case $X=F$, we obtain
\begin{align*}
FE_{\mu,\lambda}v_{i,j}&= [\mu+\lambda-i-j]q^{i(\lambda-j)}v_{i+j+1} \\
E_{\mu,\lambda}Fv_{i,j}&= q^{(i+1)(\lambda-j)}[\mu-i]v_{i+j+1}+q^{i(\lambda-j-1)}q^{-\mu+2i}[\lambda-j]v_{i+j+1} \\
&= q^{i(\lambda-j)}(q^{\lambda-j}[\mu-i]+q^{-\mu+i}[\lambda-j])v_{i+j+1} \\
&= q^{i(\lambda-j)}[\mu+\lambda-i-j]v_{i+j+1}
\end{align*}
Then we have $FE_{\mu,\lambda}=E_{\mu,\lambda}F$. From the computations above, $E_{\mu,\lambda}$ is an intertwining operator over $U_q(\mathfrak{sl}_2)$. Next we will show that $F_{\mu,\lambda}$ is an intertwining operator over $U_q(\mathfrak{sl}_2)$. Now we prove the following diagram is commutative.
\[
\xymatrix{
M(\mu+\lambda) \ar[r]^{F_{\mu,\lambda}} \ar[d]^{X} & M(\mu)\otimes M(\lambda) \ar[d]^{X} \\
M(\mu+\lambda) \ar[r]^{F_{\mu,\lambda}} & M(\mu)\otimes M(\lambda)
}
\]
Consider the case $X=K$, we obtain
\begin{align*}
KF_{\mu,\lambda}v_k&= q^{\mu+\lambda-2k}\sum_{j=0}^k q^{-(k-j)(\mu-j)}\begin{bmatrix}
k \\
j
\end{bmatrix}
\frac{\prod_{i=0}^{j-1}[\mu-i] \prod_{i=0}^{k-j-1}[\lambda-i]}{\prod_{i=0}^{k-1}[\mu+\lambda-i]}v_{j,k-j} \\
F_{\mu,\lambda}Kv_k&= \sum_{j=0}^k q^{-(k-j)(\mu-j)}\begin{bmatrix}
k \\
j
\end{bmatrix}
\frac{\prod_{i=0}^{j-1}[\mu-i] \prod_{i=0}^{k-j-1}[\lambda-i]}{\prod_{i=0}^{k-1}[\mu+\lambda-i]}q^{\mu+\lambda-2k}v_{j,k-j}
\end{align*}
Then we have $KF_{\mu,\lambda}=F_{\mu,\lambda}K$. Consider the case $X=E$. If $k=0$, $EF_{\mu,\lambda}v_k= F_{\mu,\lambda}Ev_k$ is trivial. If $k \neq 0$, we obtain
\begin{align*}
&EF_{\mu,\lambda}v_k= \\
&\sum_{j=1}^k q^{-(k-j)(\mu-j)}\begin{bmatrix}
k \\
j
\end{bmatrix}
\frac{\prod_{i=0}^{j-1}[\mu-i] \prod_{i=0}^{k-j-1}[\lambda-i]}{\prod_{i=0}^{k-1}[\mu+\lambda-i]}q^{\lambda-2(k-j)}[j]v_{j-1,k-j} \\
& \qquad +\sum_{i=0}^{k-1}q^{-(k-j)(\mu-j)}\begin{bmatrix}
k \\
j
\end{bmatrix}
\frac{\prod_{i=0}^{j-1}[\mu-i] \prod_{i=0}^{k-j-1}[\lambda-i]}{\prod_{i=0}^{k-1}[\mu+\lambda-i]}[k-j]v_{j,k-1-j} \\
&= \sum_{j=0}^{k-1} q^{-(k-1-j)(\mu-j-1)}\begin{bmatrix}
k \\
j+1
\end{bmatrix}
\frac{\prod_{i=0}^j[\mu-i] \prod_{i=0}^{k-j-2}[\lambda-i]}{\prod_{i=0}^{k-1}[\mu+\lambda-i]}q^{\lambda-2(k-1-j)}[j+1]v_{j,k-1-j} \\
& \qquad +\sum_{i=0}^{k-1}q^{-(k-1-j)(\mu-j)}\begin{bmatrix}
k-1 \\
j
\end{bmatrix}
\frac{\prod_{j=0}^{j-1}[\mu-i] \prod_{i=0}^{k-j-2}[\lambda-i]}{\prod_{i=0}^{k-2}[\mu+\lambda-i]}\frac{q^{-\mu+j}[k][\lambda-k+j+1]}{[\mu+\lambda-k+1]}v_{j,k-1-j} \\
&= \sum_{j=0}^{k-1} q^{-(k-1-j)(\mu-j)}\begin{bmatrix}
k-1 \\
j
\end{bmatrix}
\frac{\prod_{i=0}^{j-1}[\mu-i] \prod_{i=0}^{k-j-2}[\lambda-i]}{\prod_{i=0}^{k-2}[\mu+\lambda-i]}\frac{q^{\lambda-k+j+1}[k][\mu-j]}{[\mu+\lambda-k+1]}v_{j,k-1-j} \\
& \qquad +\sum_{j=0}^{k-1}q^{-(k-1-j)(\mu-j)}\begin{bmatrix}
k-1 \\
j
\end{bmatrix}
\frac{\prod_{i=0}^{j-1}[\mu-i] \prod_{i=0}^{k-j-2}[\lambda-i]}{\prod_{i=0}^{k-2}[\mu+\lambda-i]}\frac{q^{-\mu+j}[k][\lambda-k+j+1]}{[\mu+\lambda-k+1]}v_{j,k-1-j} \\
&= [k]\sum_{j=0}^{k-1} q^{-(k-1-j)(\mu-j)}\begin{bmatrix}
k-1 \\
j
\end{bmatrix}
\frac{\prod_{i=0}^{j-1}[\mu-i] \prod_{i=0}^{k-j-2}[\lambda-i]}{\prod_{i=0}^{k-2}[\mu+\lambda-i]} \\
& \qquad\qquad \cdot\frac{q^{\lambda-k+j+1}[\mu-j]+q^{-\mu+j}[\lambda-k+j+1]}{[\mu+\lambda-k+1]}v_{j,k-1-j} \\
&= [k]\sum_{j=0}^{k-1} q^{-(k-1-j)(\mu-j)}\begin{bmatrix}
k-1 \\
j
\end{bmatrix}
\frac{\prod_{i=0}^{j-1}[\mu-i] \prod_{i=0}^{k-j-2}[\lambda-i]}{\prod_{i=0}^{k-2}[\mu+\lambda-i]}v_{j,k-1-j}
\end{align*}
\begin{align*}
F_{\mu,\lambda}Ev_{i,j}= [k]\sum_{j=0}^{k-1} q^{-(k-1-j)(\mu-j)}\begin{bmatrix}
k-1 \\
j
\end{bmatrix}
\frac{\prod_{i=0}^{j-1}[\mu-i] \prod_{i=0}^{k-j-2}[\lambda-i]}{\prod_{i=0}^{k-2}[\mu+\lambda-i]}v_{j,k-1-j}
\end{align*}
Then we have $EF_{\mu,\lambda}=F_{\mu,\lambda}E$. Consider the case $X=F$, we obtain
\begin{align*}
FF_{\mu,\lambda}v_k&= \sum_{j=0}^k q^{-(k-j)(\mu-j)}\begin{bmatrix}
k \\
j
\end{bmatrix}
\frac{\prod_{i=0}^{j-1}[\mu-i]\prod_{i=0}^{k-j-1}[\lambda-i]}{\prod_{i=0}^{k-1}[\mu+\lambda-i]}Fv_{j,k-j} \\
&= \sum_{j=0}^k q^{-(k-j)(\mu-j)}\begin{bmatrix}
k \\
j
\end{bmatrix}
\frac{\prod_{i=0}^{j-1}[\mu-i]\prod_{i=0}^{k-j-1}[\lambda-i]}{\prod_{i=0}^{k-1}[\mu+\lambda-i]}[\mu-j]v_{j+1,k-j} \\
&\qquad +\sum_{j=0}^k q^{-(k-j)(\mu-j)}\begin{bmatrix}
k \\
j
\end{bmatrix}
\frac{\prod_{i=0}^{j-1}[\mu-i]\prod_{i=0}^{k-j-1}[\lambda-i]}{\prod_{i=0}^{k-1}[\mu+\lambda-i]}q^{-\mu+2j}[\lambda-k+j]v_{j,k+1-j} \\
&= \sum_{j=1}^{k+1} q^{-(k+1-j)(\mu-j+1)}\begin{bmatrix}
k \\
j-1
\end{bmatrix}
\frac{\prod_{i=0}^{j-2}[\mu-i]\prod_{i=0}^{k-j}[\lambda-i]}{\prod_{i=0}^{k-1}[\mu+\lambda-i]}[\mu-j+1]v_{j,k+1-j} \\
&\qquad +\sum_{j=0}^k q^{-(k+1-j)(\mu-j)}\begin{bmatrix}
k \\
j
\end{bmatrix}
\frac{\prod_{i=0}^{j-1}[\mu-i]\prod_{i=0}^{k-j}[\lambda-i]}{\prod_{i=0}^{k-1}[\mu+\lambda-i]}q^j v_{j,k+1-j} \\
&= \sum_{j=0}^{k+1} q^{-(k+1-j)(\mu-j)}\biggl(q^{-k-1+j}\begin{bmatrix}
k \\
j-1
\end{bmatrix}
+q^j\begin{bmatrix}
k \\
j
\end{bmatrix}
\biggr)\frac{\prod_{i=0}^{j-1}[\mu-i]\prod_{i=0}^{k-j}[\lambda-i]}{\prod_{i=0}^{k-1}[\mu+\lambda-i]}v_{j,k+1-j} \\
&= \sum_{j=0}^{k+1} q^{-(k+1-j)(\mu-j)}\begin{bmatrix}
k+1 \\
j
\end{bmatrix}
\frac{\prod_{i=0}^{j-1}[\mu-i]\prod_{i=0}^{k-j}[\lambda-i]}{\prod_{i=0}^{k-1}[\mu+\lambda-i]}v_{j,k+1-j} \\
&\qquad (\because \mathrm{Lemma} \ \ref{2term})
\end{align*}
\begin{align*}
F_{\mu,\lambda}Fv_k&= [\mu+\lambda-k]F_{\mu,\lambda}v_{k+1} \\
&= [\mu+\lambda-k]\sum_{j=0}^{k+1} q^{-(k+1-j)(\mu-j)}\begin{bmatrix}
k+1 \\
j
\end{bmatrix}
\frac{\prod_{i=0}^{j-1}[\mu-i]\prod_{i=0}^{k-j}[\lambda-i]}{\prod_{i=0}^k[\mu+\lambda-i]}v_{j,k+1-j} \\
&= \sum_{j=0}^{k+1} q^{-(k+1-j)(\mu-j)}\begin{bmatrix}
k+1 \\
j
\end{bmatrix}
\frac{\prod_{i=0}^{j-1}[\mu-i]\prod_{i=0}^{k-j}[\lambda-i]}{\prod_{i=0}^{k-1}[\mu+\lambda-i]}v_{j,k+1-j}
\end{align*}
Then we have $FF_{\mu,\lambda}=F_{\mu,\lambda}F$. From the results above, $F_{\mu,\lambda}$ is an intertwining operator.
\end{proof}

\section{Main Theorem}
In this section, we define special idempotents in $TL_{\mu_1,\cdots,\mu_n}$ like the Jones Wenzl projector in $TL_n$. Hereinafter, let the ground field of $U_q(\mathfrak{sl}_2)$ and $M(\mu_1)\otimes \cdots \otimes M(\mu_n)$ be $\mathbb{C}(q,q^{\mu_1},\cdots,q^{\mu_n})$.
\begin{dfn}
Jones Wenzl projector $P_n \in TL_n$ is defined as follows.
\begin{align*}
P_1:=\mathrm{Id}, \quad P_n:=P_{n-1}+\frac{[n-1]}{[n]}P_{n-1}e_{n-1}P_{n-1}
\end{align*}
\end{dfn}

\begin{prop}
We have
\begin{align*}
P_n^2&=P_n \\
\end{align*}
\end{prop}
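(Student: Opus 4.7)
The statement is the classical Wenzl idempotence for the Jones--Wenzl projector. The plan is to prove it by induction on $n$, carrying along as a parallel inductive claim the absorption property $e_iP_n=P_ne_i=0$ for $1\le i\le n-1$, since the two facts feed into each other. The base case $n=1$ is trivial and the absorption is vacuous.

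For the inductive step, the recursion together with the hypothesis $P_{n-1}^2=P_{n-1}$ immediately yields
\[
P_nP_{n-1}\;=\;P_{n-1}^2+\tfrac{[n-1]}{[n]}P_{n-1}e_{n-1}P_{n-1}^2\;=\;P_n,
\]
and symmetrically $P_{n-1}P_n=P_n$. Expanding
\[
P_n^2\;=\;P_n\Bigl(P_{n-1}+\tfrac{[n-1]}{[n]}P_{n-1}e_{n-1}P_{n-1}\Bigr)\;=\;P_n+\tfrac{[n-1]}{[n]}P_ne_{n-1}P_{n-1}
\]
then reduces the idempotence claim to the absorption $P_ne_{n-1}=0$. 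Absorption for $1\le i\le n-2$ is immediate from the inductive absorption of $P_{n-1}$ after a one-step expansion of $P_n$, using that $e_i$ commutes with $e_{n-1}$. The case $i=n-1$ is, I expect, the main obstacle and requires a closed evaluation of $e_{n-1}P_{n-1}e_{n-1}$.

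To carry out that evaluation I would expand $P_{n-1}$ once more by its own recursion, and exploit the fact that $P_{n-2}$ lies in the subalgebra generated by $e_1,\ldots,e_{n-3}$ and hence commutes with $e_{n-1}$. Combining this with the Temperley--Lieb relations $e_{n-1}^2=-[2]e_{n-1}$ (the loop value in this paper's sign convention is $\mathrm{cup}\circ\mathrm{cap}(1)=-[2]$) and $e_{n-1}e_{n-2}e_{n-1}=e_{n-1}$, together with $P_{n-2}^2=P_{n-2}$ from the induction hypothesis, I expect the expression to collapse to
\[
e_{n-1}P_{n-1}e_{n-1}\;=\;\Bigl(-[2]+\tfrac{[n-2]}{[n-1]}\Bigr)P_{n-2}e_{n-1}\;=\;-\tfrac{[n]}{[n-1]}P_{n-2}e_{n-1},
\]
the last equality being the standard quantum-integer identity $[2][n-1]=[n]+[n-2]$. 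Substituting this into $P_ne_{n-1}=P_{n-1}e_{n-1}+\tfrac{[n-1]}{[n]}P_{n-1}(e_{n-1}P_{n-1}e_{n-1})$ and invoking $P_{n-1}P_{n-2}=P_{n-1}$ (proved exactly as $P_nP_{n-1}=P_n$ above), the two summands cancel precisely, giving $P_ne_{n-1}=0$ and hence $P_n^2=P_n$. The subtle point is precisely this final quantum-integer cancellation, which is what the specific coefficient $[n-1]/[n]$ in the definition of $P_n$ has been engineered to produce.
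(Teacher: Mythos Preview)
Your argument is correct: the parallel induction on idempotence and absorption, together with the evaluation $e_{n-1}P_{n-1}e_{n-1}=-\tfrac{[n]}{[n-1]}P_{n-2}e_{n-1}$ via the quantum-integer identity $[2][n-1]=[n]+[n-2]$, is exactly the classical Wenzl computation and goes through as you describe. One small remark: for the absorption with $i\le n-2$ you do not actually need the commutation of $e_i$ with $e_{n-1}$, since the inductive hypothesis $P_{n-1}e_i=0$ already kills both summands of $P_ne_i$ directly.

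As for comparison with the paper: the paper does not prove this proposition at all but simply cites Proposition~2 of Kauffman--Lins. Your proof is the standard inductive argument found there, so in substance the approaches coincide; you have merely written out what the paper outsources to a reference.
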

\begin{proof}
See Proposition 2 in \cites{KauffmanLins}.
\end{proof}

\begin{dfn}
Put $E_{\mu,\lambda;\mu_1,\cdots,\mu_i}:= E_{\mu,\lambda} \otimes \mathrm{Id}_{\mu_1}\otimes\cdots \otimes \mathrm{Id}_{\mu_i}$ and $F_{\mu,\lambda;\mu_1,\cdots,\mu_i}:= F_{\mu,\lambda} \otimes \mathrm{Id}_{\mu_1}\otimes\cdots \otimes \mathrm{Id}_{\mu_i}$. By induction on $n>2$, extended Jones Wenzl projectors $P_{\mu_1,\cdots,\mu_n} \in TL_{\mu_1,\cdots,\mu_n}$ are defined as follows.
\begin{align*}
P_{\mu_1,\mu_2}:= F_{\mu_1,\mu_2}E_{\mu_1,\mu_2}, \quad P_{\mu_1,\cdots,\mu_n}:=F_{\mu_1,\mu_2;\mu_3,\cdots,\mu_n}P_{\mu_1+\mu_2,\mu_3,\cdots,\mu_n}E_{\mu_1,\mu_2;\mu_3,\cdots,\mu_n}
\end{align*}
\end{dfn}
\begin{rem}
The definition above is inspired by Definition 2.11 in \cites{RoseTubbenhauer}. By Corollary 2.13 in \cites{RoseTubbenhauer}, the Jones Wenzl projectors defined in \cites{RoseTubbenhauer} coincide with $P_n$.
\end{rem}

\begin{lem} \label{EF=[mu+1]}
We have
\[
E_{\mu,\lambda}F_{\mu,\lambda}= \mathrm{Id}_{\mu+\lambda}
\]
\end{lem}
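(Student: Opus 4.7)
The plan is to leverage the preceding proposition (which shows $E_{\mu,\lambda}$ and $F_{\mu,\lambda}$ are intertwiners) to avoid any bulk sum manipulation. The composition $E_{\mu,\lambda}F_{\mu,\lambda}$ automatically lies in $\mathrm{End}_{U_q(\mathfrak{sl}_2)}(M(\mu+\lambda))$, and the Verma module is cyclic on its highest weight vector $v_0$ via repeated application of $F$, so it suffices to check the equation on $v_0$ and then transport it along the $F$-action.

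First I would verify the base case. Plugging $k=0$ into the defining sum for $F_{\mu,\lambda}$ collapses everything to the single $j=0$ term: the binomial coefficient is $1$, all four products are empty, and the $q$-exponent is $0$, so $F_{\mu,\lambda}(v_0) = v_{0,0}$. Then $E_{\mu,\lambda}(v_{0,0}) = q^{0 \cdot (\lambda-0)} v_0 = v_0$, proving $E_{\mu,\lambda}F_{\mu,\lambda}(v_0) = v_0$. Next I would propagate to all $v_k$ by induction on $k$, using $F v_k = [\mu+\lambda-k] v_{k+1}$ in $M(\mu+\lambda)$ and the intertwining identity $E_{\mu,\lambda}F_{\mu,\lambda}\circ F = F \circ E_{\mu,\lambda}F_{\mu,\lambda}$; once $E_{\mu,\lambda}F_{\mu,\lambda}(v_k) = v_k$ is known, applying $F$ to both sides and dividing by $[\mu+\lambda-k]$ gives the claim for $v_{k+1}$.

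The one point deserving verification — and essentially the only potential obstacle — is that the scalars $[\mu+\lambda-k]$ used in the division are nonzero in the ground field $\mathbb{C}(q, q^\mu, q^\lambda)$. This is immediate from the algebraic independence of $q^\mu$ and $q^\lambda$: the equality $q^{\mu+\lambda-k} = q^{-(\mu+\lambda-k)}$ would force $(q^\mu q^\lambda)^2 = q^{2k}$, which is impossible in $\mathbb{C}(q, q^\mu, q^\lambda)$ for any non-negative integer $k$. An alternative direct route would expand $E_{\mu,\lambda}F_{\mu,\lambda}(v_k)$ term by term, reducing the claim to the Verma-module $q$-Chu--Vandermonde identity
\[
\sum_{j=0}^k q^{-k\mu + j(\mu+\lambda)} \begin{bmatrix} k \\ j \end{bmatrix} \prod_{i=0}^{j-1}[\mu-i] \prod_{i=0}^{k-j-1}[\lambda-i] = \prod_{i=0}^{k-1}[\mu+\lambda-i],
\]
provable by induction on $k$ using Lemma \ref{2term}; but the intertwiner argument above is considerably cleaner and is what I would write up.
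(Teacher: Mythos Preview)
Your argument is correct and is genuinely different from the paper's. The paper proceeds by the direct route you sketch at the end: it expands $E_{\mu,\lambda}F_{\mu,\lambda}v_k$ into the sum
\[
\sum_{j=0}^k q^{j(\mu+\lambda)-k\mu}\begin{bmatrix} k \\ j \end{bmatrix}\frac{\prod_{i=0}^{j-1}[\mu-i]\prod_{i=0}^{k-j-1}[\lambda-i]}{\prod_{i=0}^{k-1}[\mu+\lambda-i]}\,v_k
\]
and then proves the resulting $q$-Chu--Vandermonde identity by induction on $k$, splitting $[\mu+\lambda-k]=q^{\lambda-k+j}[\mu-j]+q^{-\mu+j}[\lambda-k+j]$ and invoking Lemma~\ref{2term}. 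Your approach instead exploits the already-established intertwiner property of $E_{\mu,\lambda}$ and $F_{\mu,\lambda}$ together with the fact that $M(\mu+\lambda)$ is generated from $v_0$ by $F$; this replaces the combinatorial induction by a one-line transport along the $F$-action. The trade-off is that your proof leans on the preceding proposition (so its correctness is only as clean as that verification), whereas the paper's computation is self-contained and, as a byproduct, establishes the combinatorial identity explicitly. Your remark that $[\mu+\lambda-k]\neq 0$ is already implicit in the paper's setup, since these quantities appear in the denominator of the very definition of $F_{\mu,\lambda}$.
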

\begin{proof}
Let $k$ be a non-negative integer. Then we have
\begin{align*}
E_{\mu,\lambda}F_{\mu,\lambda}v_k&= \sum_{j=0}^k q^{-(k-j)(\mu-j)}\begin{bmatrix}
k \\
j
\end{bmatrix}
\frac{\prod_{i=0}^{j-1}[\mu-i] \prod_{i=0}^{k-j-1}[\lambda-i]}{\prod_{i=0}^{k-1}[\mu+\lambda-i]}q^{j(\lambda-k+j)}v_k \\
&= \sum_{j=0}^k q^{j(\mu+\lambda)-k\mu}\begin{bmatrix}
k \\
j
\end{bmatrix}
\frac{\prod_{i=0}^{j-1}[\mu-i] \prod_{i=0}^{k-j-1}[\lambda-i]}{\prod_{i=0}^{k-1}[\mu+\lambda-i]}v_k
\end{align*}
Now we prove $E_{\mu,\lambda}F_{\mu,\lambda}v_k= v_k$ by induction on $k$. If $k=0$, it is trivial by the above. By the above calculation, it suffices to show that
\begin{align*}
\prod_{i=0}^k[\mu+\lambda-i]=\sum_{j=0}^{k+1}q^{j(\mu+\lambda)-(k+1)\mu}\begin{bmatrix}
k+1 \\
j
\end{bmatrix}
\prod_{i=0}^{j-1}[\mu-i]\prod_{i=0}^{k-j}[\lambda-i]
\end{align*}
By induction, we obtain
\begin{align*}
\prod_{i=0}^k[\mu+\lambda-i]&= [\mu+\lambda-k]\prod_{i=0}^{k-1}[\mu+\lambda-i] \\
&= [\mu+\lambda-k]\sum_{j=0}^k q^{j(\mu+\lambda)-k\mu}\begin{bmatrix}
k \\
j
\end{bmatrix}
\prod_{i=0}^{j-1}[\mu-i]\prod_{i=0}^{k-j-1}[\lambda-i] \\
&\qquad (\because \mathrm{by} \ \mathrm{induction}) \\
&= \sum_{j=0}^k q^{j(\mu+\lambda)-k\mu}\begin{bmatrix}
k \\
j
\end{bmatrix}
q^{\lambda-k+j}\prod_{i=0}^j[\mu-i]\prod_{i=0}^{k-j-1}[\lambda-i] \\
&\qquad +\sum_{j=0}^k q^{j(\mu+\lambda)-k\mu}\begin{bmatrix}
k \\
j
\end{bmatrix}
q^{-\mu+j}\prod_{i=0}^{j-1}[\mu-i]\prod_{i=0}^{k-j}[\lambda-i] \\
&\qquad (\because [\mu+\lambda-k]=q^{\lambda-k+j}[\mu-j]+q^{-\mu+j}[\lambda-k+j]) \\
&= \sum_{j=1}^{k+1}q^{j(\mu+\lambda)-(k+1)\mu}\prod_{i=0}^{j-1}[\mu-i]\prod_{i=0}^{k-j}[\lambda-i]q^{-k+j-1}\begin{bmatrix}
k \\
j-1
\end{bmatrix} \\
&\qquad +\sum_{j=0}^k q^{j(\mu+\lambda)-(k+1)\mu}\prod_{i=0}^{j-1}[\mu-i]\prod_{i=0}^{k-j}[\lambda-i]q^j\begin{bmatrix}
k \\
j
\end{bmatrix} \\
&= \sum_{j=1}^{k+1}q^{j(\mu+\lambda)-(k+1)\mu}\begin{bmatrix}
k+1 \\
j
\end{bmatrix}
\prod_{i=0}^{j-1}[\mu-i]\prod_{i=0}^{k-j}[\lambda-i] \\
&\qquad (\because \mathrm{Lemma} \ \ref{2term})
\end{align*}
Then the result follows.
\end{proof}

\begin{thm} \label{theorem}
Extended Jones Wenzl projectors $P_{\mu_1,\ldots,\mu_n} \in TL_{\mu_1,\ldots,\mu_n}$ satisfy the following conditions.
\begin{align*}
P_{\mu_1,\ldots,\mu_n}^2= P_{\mu_1,\ldots,\mu_n}
\end{align*}
\end{thm}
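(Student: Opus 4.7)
The plan is to proceed by induction on $n \geq 2$, relying fundamentally on Lemma \ref{EF=[mu+1]}.

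For the base case $n = 2$, I would simply expand
\[
P_{\mu_1,\mu_2}^2 = F_{\mu_1,\mu_2} E_{\mu_1,\mu_2} F_{\mu_1,\mu_2} E_{\mu_1,\mu_2}
\]
and apply Lemma \ref{EF=[mu+1]} to collapse the middle factor $E_{\mu_1,\mu_2} F_{\mu_1,\mu_2}$ to $\mathrm{Id}_{\mu_1+\mu_2}$, leaving $F_{\mu_1,\mu_2} E_{\mu_1,\mu_2} = P_{\mu_1,\mu_2}$.

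For the inductive step, the key observation is that tensoring both sides of Lemma \ref{EF=[mu+1]} with $\mathrm{Id}_{\mu_3}\otimes\cdots\otimes\mathrm{Id}_{\mu_n}$ yields
\[
E_{\mu_1,\mu_2;\mu_3,\ldots,\mu_n}\, F_{\mu_1,\mu_2;\mu_3,\ldots,\mu_n} = \mathrm{Id}_{M(\mu_1+\mu_2)\otimes M(\mu_3)\otimes\cdots\otimes M(\mu_n)}.
\]
Plugging the recursive definition into $P_{\mu_1,\ldots,\mu_n}^2$, the inner pair $E_{\mu_1,\mu_2;\mu_3,\ldots,\mu_n} F_{\mu_1,\mu_2;\mu_3,\ldots,\mu_n}$ cancels, leaving
\begin{align*}
P_{\mu_1,\ldots,\mu_n}^2
&= F_{\mu_1,\mu_2;\mu_3,\ldots,\mu_n}\, P_{\mu_1+\mu_2,\mu_3,\ldots,\mu_n}^2\, E_{\mu_1,\mu_2;\mu_3,\ldots,\mu_n}.
\end{align*}
The inductive hypothesis $P_{\mu_1+\mu_2,\mu_3,\ldots,\mu_n}^2 = P_{\mu_1+\mu_2,\mu_3,\ldots,\mu_n}$ then recovers exactly $P_{\mu_1,\ldots,\mu_n}$.

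I do not anticipate any serious obstacle: essentially all the nontrivial computation has been front-loaded into Lemma \ref{EF=[mu+1]} (which itself depended on the quantum Pascal identity of Lemma \ref{2term}). The only mild caveat is that the recursion demands $\mu_1+\mu_2$ not be a non-negative integer so that $M(\mu_1+\mu_2)$, and hence $P_{\mu_1+\mu_2,\mu_3,\ldots,\mu_n}$, are well-defined; this should be stated as a genericity hypothesis on the weights rather than a substantive difficulty in the argument.
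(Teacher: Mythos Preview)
Your proposal is correct and matches the paper's proof essentially line for line: induction on $n$, the base case handled by Lemma~\ref{EF=[mu+1]}, and the inductive step by collapsing the inner $E_{\mu_1,\mu_2;\mu_3,\ldots,\mu_n}F_{\mu_1,\mu_2;\mu_3,\ldots,\mu_n}$ via the same lemma before invoking the hypothesis. Your explicit remark that the tensored identity follows from Lemma~\ref{EF=[mu+1]} and your caveat about $\mu_1+\mu_2$ are helpful clarifications the paper leaves implicit.
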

\begin{proof}
We prove it by induction on $n>2$. If $n=2$, from Lemma \ref{EF=[mu+1]}, we have
\begin{align*}
P_{\mu_1,\mu_2}^2&= F_{\mu_1,\mu_2}E_{\mu_1,\mu_2}F_{\mu_1,\mu_2}E_{\mu_1,\mu_2} \\
&= F_{\mu_1,\mu_2}E_{\mu_1,\mu_2}
\end{align*}
Then we obtain $P_{\mu_1,\mu_2}^2=P_{\mu_1,\mu_2}$. Suppose that $P_{\mu_1,\ldots,\mu_{n-1}}^2= P_{\mu_1,\ldots,\mu_{n-1}}$. Then we have
\begin{align*}
P_{\mu_1,\ldots,\mu_n}^2&= F_{\mu_1,\mu_2;\mu_3,\cdots,\mu_n}P_{\mu_1+\mu_2,\mu_3,\cdots,\mu_n}E_{\mu_1,\mu_2;\mu_3,\cdots,\mu_n}F_{\mu_1,\mu_2;\mu_3,\cdots,\mu_n}P_{\mu_1+\mu_2,\mu_3,\cdots,\mu_n}E_{\mu_1,\mu_2;\mu_3,\cdots,\mu_n} \\
&= F_{\mu_1,\mu_2;\mu_3,\cdots,\mu_n}P_{\mu_1+\mu_2,\mu_3,\cdots,\mu_n}^2 E_{\mu_1,\mu_2;\mu_3,\cdots,\mu_n} \quad (\because \mathrm{Lemma} \ \ref{EF=[mu+1]}) \\
&= F_{\mu_1,\mu_2;\mu_3,\cdots,\mu_n}P_{\mu_1+\mu_2,\mu_3,\cdots,\mu_n}E_{\mu_1,\mu_2;\mu_3,\cdots,\mu_n} \quad (\because \mathrm{By} \ \mathrm{induction}) \\
&= P_{\mu_1,\cdots,\mu_n}
\end{align*}
Thus the result follows.
\end{proof}

\begin{bibdiv}
\begin{biblist}
\bib{AndersenLehrerZhang}{article}{
author = {Andersen, Henning}
author = {Lehrer, Gus}
author = {Zhang, Ruibin},
title = {Cellularity of certain quantum endomorphism algebras},
journal = {Pacific Journal of Mathematics},
volume = {279},
pages = {11-35},
year = {2013}
}

\bib{MR3263166}{article}{
AUTHOR = {Cautis, Sabin}
author = {Kamnitzer, Joel}
author = {Morrison, Scott},
TITLE = {Webs and quantum skew {H}owe duality},
JOURNAL = {Math. Ann.},
FJOURNAL = {Mathematische Annalen},
VOLUME = {360},
NUMBER = {1-2},
PAGES = {351--390},
YEAR = {2014},
ISSN = {0025-5831}
}


\bib{Jones}{article}{
author = {Vaughan F. R. Jones},
title = {A polynomial invariant for knots via von Neumann algebras},
journal = {Bulletin (New Series) of the American Mathematical Society},
volume = {12},
number = {1},
pages = {103-111},
publisher = {American Mathematical Society},
year = {1985},
}

\bib{KauffmanLins}{book}{
author = {Louis H. Kauffman},
author = {S. Lins},
title = {Temperley-Lieb Recoupling Theory and Invariants of 3-Manifolds (AM-134)},
publisher = {Princeton University Press},
year = {1994}
}

\bib{LacabanneTubbenhauerVaz}{article}{
author = {Lacabanne, Abel}
author = {Tubbenhauer, Daniel}
author = {Vaz, Pedro},
title = {Verma Howe duality and LKB representations},
year = {2022},
note = {preprint}
}

\bib{MurakamiMurakami}{article}{
author = {Hitoshi Murakami}
author = {Jun Murakami},
title = {The colored Jones polynomials and the simplicial volume of a knot},
journal = {Acta Mathematica},
volume = {186},
number = {1},
pages = {85 - 104},
publisher = {Institut Mittag-Leffler},
year = {2001},
}

\bib{RoseTubbenhauer}{article}{
author = {Rose, David E. V.}
author = {Tubbenhauer, Daniel},
title = {Symmetric Webs, Jones-Wenzl Recursions, and q-Howe Duality},
journal = {International Mathematics Research Notices},
volume = {2016},
number = {17},
pages = {5249-5290},
year = {2015},
month = {10},
}

\bib{TemperleyLieb}{article}{
author = {H. N. V. Temperley}
author = {E. H. Lieb},
title = {Relations between the 'Percolation' and 'Colouring' Problem and other Graph-Theoretical Problems Associated with Regular Planar Lattices: Some Exact Results for the 'Percolation' Problem},
journal = {Proceedings of the Royal Society of London. Series A, Mathematical and Physical Sciences},
volume = {322},
number = {1549},
pages = {251--280},
publisher = {The Royal Society},
year = {1971}
}

\bib{Wenzl}{article}{
author = {H. Wenzl},
title = {On sequences of projections},
journal = {C. R. Math. Rep. Can. J. Math},
number = {9},
pages = {5-9},
year = {1897}
}
\end{biblist}
\end{bibdiv}

\footnotesize{DEPARTMENT OF MATHEMATICS, TOHOKU UNIVERSITY, 6-3, AOBA, ARAMAKI-AZA, AOBA-KU, SENDAI, 980-8578, JAPAN} \par
\footnotesize{\textit{Email Address}}: \texttt{matsumoto.ryoga.t2@dc.tohoku.ac.jp}

\end{document}